\documentclass{article}

\usepackage[utf8]{inputenc}
\usepackage[english]{babel}
\usepackage{amssymb,amsthm,amsmath,amsfonts,mathrsfs}
\usepackage[table]{xcolor}
\usepackage{tikz}
\usepackage{hyperref}
\usepackage{xfrac}
\usepackage{wrapfig}

\newtheorem{theorem}{Theorem}[section]

\newtheorem{lemma}[theorem]{Lemma}

\theoremstyle{definition}
\newtheorem{definition}[theorem]{Definition}
\newtheorem{example}[theorem]{Example}

\theoremstyle{remark}

\newtheorem{observation}[theorem]{Observation}


\newcommand{\N}{\mathcal{N}}

\newcommand{\tu}{\includegraphics[width=0.075\linewidth]{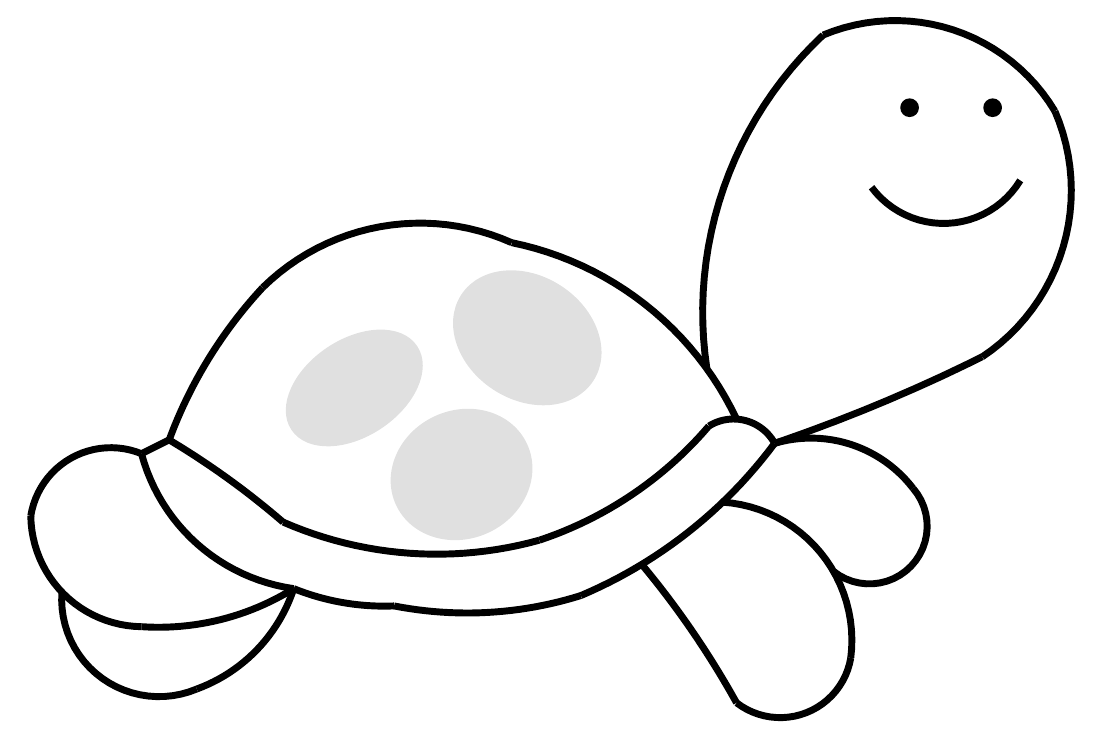}}
\newcommand{\td}{\includegraphics[width=0.075\linewidth]{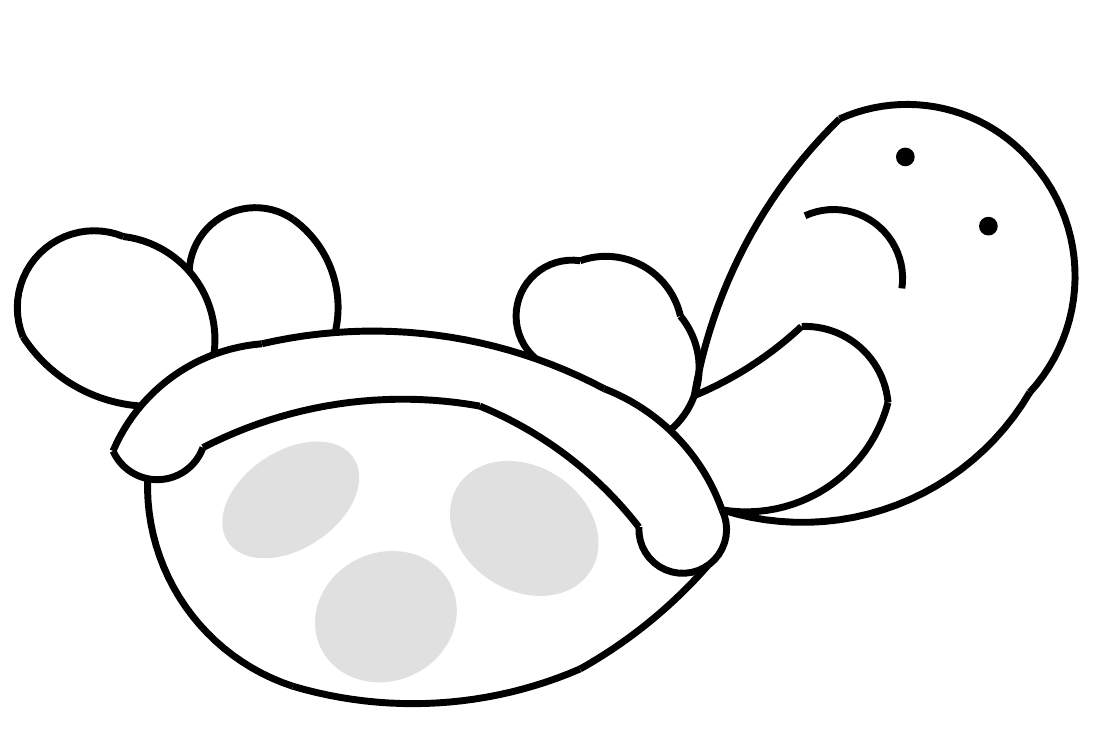}}

\tolerance=1
\emergencystretch=\maxdimen
\hyphenpenalty=10000
\hbadness=10000

\begin{document}

\title{A Note on Numbers}
\maketitle

\begin{center}
  {\sc Alda Carvalho}\textsuperscript{1},
  {\sc Melissa A. Huggan}\textsuperscript{2},
    {\sc Richard J. Nowakowski}\textsuperscript{3},\\
      {\sc Carlos Pereira dos Santos}\textsuperscript{4}
  \par \bigskip

  \textsuperscript{1}ISEL--IPL \& CEMAPRE--University of Lisbon, \href{mailto:alda.carvalho@isel.pt}{alda.carvalho@isel.pt} \par
  \textsuperscript{2}Ryerson University, \href{mailto:melissa.huggan@ryerson.ca}{melissa.huggan@ryerson.ca} \par
          \textsuperscript{3}Dalhousie University, \href{mailto:r.nowakowski@dal.ca}{r.nowakowski@dal.ca} \par
    \textsuperscript{4}Center for Functional Analysis, Linear Structures and Applications, University of Lisbon \& ISEL--IPL, \href{mailto:carlos.santos@isel.pt}{carlos.santos@isel.pt}

   \bigskip

\end{center}

\begin{abstract}
\noindent
When are all positions of a game numbers? We show that two properties are necessary and sufficient. These properties are consequences of that, in a number, it is not an advantage to be the first player. One of these properties implies the other. However, checking for one or the other, rather than just one, can often be accomplished by only looking at the positions on the `board'. If the stronger property holds for all positions, then the values are integers.
\end{abstract}

\noindent
{\sc Keywords}: Combinatorial Game Theory, numbers, {\sc blue-red-hackenbush}, {\sc domino shave}, {\sc shove}, {\sc push}, {\sc lenres}, {\sc polychromatic chomp}, {\sc partizan turning turtles}, {\sc divisors}, {\sc blue-red-cherries}, {\sc cutcake}, {\sc erosion}.

\section{Introduction}
When analyzing games, an early question is: is it possible that all the positions are numbers? If that is true, then it is easy to determine the outcome of a disjunctive sum of positions, just add up the numbers. It is also easy to find the best move, just play the summand with the largest denominator. The problem is how to recognize when all the positions are numbers.

Siegel \cite{Sie}, page 81, exercise 3.15, states ``If every incentive of $G$ is negative then $G$ is a number''. This does not provide much insight or intuition. In fact, in most non-all-small-games, there are non-zero positions, some of which are  numbers and others not. Let $\mathbf{S}$ be a set of positions of a ruleset. It is called a \emph{hereditary closed set of positions of a ruleset} (HCR) if it is closed under taking options. These HCR sets are the natural objects to consider.

There are two properties either of which, if satisfied for all followers of a position, tells us that the position is a number.
 Both are aspects of the  \textit{first-move-disadvantage} in numbers. The first is a comparison with one move against two moves.

\begin{definition}[F1  Property]\label{def:sp}
Let $\mathbf{S}$ be a HCR. Given $G\in \mathbf{S}$, the pair $(G^L,G^R)\in G^\mathcal{L}\times G^\mathcal{R}$ satisfies the \emph{F1  property} if there is $G^{RL}\in G^{R\mathcal{L}}$ such that $G^{RL}\geqslant G^L$ or there is $G^{LR}\in G^{L\mathcal{R}}$ such that $G^{LR}\leqslant G^R$.
\end{definition}

The second property involves moves by both players.

\begin{definition} [F2  Property]\label{def:dp}
Let $\mathbf{S}$ be a HCR. Given $G\in \mathbf{S}$, $(G^L,G^R)\in G^\mathcal{L}\times G^\mathcal{R}$ satisfies the \emph{F2  property} if there are $G^{LR}\in G^{L\mathcal{R}}$ and $G^{RL}\in G^{R\mathcal{L}}$ such that $G^{RL}
\geqslant G^{LR}$.
\end{definition}

In many games, the literal form of positions will tell us whether they satisfy the F1 property or the F2 property with equality. See Section~\ref{sec:examples} for examples.\\

The two results a player should remember are:\\

\noindent \textbf{Lemma \ref{th:sc}} Let $\mathbf{S}$ be a {\rm HCR}. If, for any position $G\in \mathbf{S}$, all pairs $(G^L,G^R)\in G^\mathcal{L}\times G^\mathcal{R}$ satisfy the F1  property or the F2  property, then  all positions $G\in\mathbf{S}$ are numbers;\\

and\\

\noindent \textbf{Theorem \ref{th:i}}
Let $\mathbf{S}$ be a {\rm HCR}. If, for any position $G\in \mathbf{S}$, all pairs $(G^L,G^R)\in G^\mathcal{L}\times G^\mathcal{R}$ satisfy the F2  property, then all positions $G\in\mathbf{S}$ are integers.\\

Theorem \ref{th:con} is the central theoretical result: All the positions in a HCR set are numbers, if and only if there is no position and no number such that the sum is an $\N$-position. This is all that is required to prove Lemma \ref{th:sc}.
  Lemma \ref{lem:descendfloss} shows that the F2  property implies the F1  property, with strict inequality.
Now, it may seem that the F2  property is useless and it shouldn't be a hypothesis of Lemma \ref{th:sc}. However, in practice, it is easier to recognize that all pairs satisfy either the F1  property or the F2  property rather than trying to prove that all pairs satisfy just the F1  property. In fact, Lemma \ref{th:sc} may be written as a necessary and sufficient condition. This is Theorem \ref{th:snc} which only uses the  F1  property.

The F2  property is a stronger constraint than the F1  property.  Theorem~\ref{th:i} shows that the F2  property implies that the numbers will be integers.
\\

We recall the results about numbers needed for this paper.

\begin{theorem}{\rm\cite{ANW,BCG,Sie}} Let $G$ be a number whose options are numbers.
\begin{enumerate}
\item After removing dominated options, the form of $G$ has at most one Left option and at most one Right option.
\item For the options that exist, $G^L<G<G^R$.
\item If there is an integer $k$, $G^L<k<G^R$, or if either $G^L$ or $G^R$ does not exist, then $G$ is an integer.
\item If both $G^L$ and $G^R$ exist and the previous case does not apply, then $G$ is the simplest number between $G^L$ and $G^R$.
\end{enumerate}
\end{theorem}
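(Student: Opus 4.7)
The plan is to proceed by induction on the birthday of $G$, assuming all options $G^L, G^R$ are numbers that already satisfy the four properties.

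For part 1, I would invoke the fact that numbers form a totally ordered class: any two Left options $G^{L_1}, G^{L_2}$ are comparable, so the smaller one is dominated and may be deleted; iterating leaves at most one surviving Left option, and symmetrically at most one Right option. For part 2, the Simplicity Theorem gives $G^L < G < G^R$ on the surviving options, since a number $G$ equals the simplest (smallest-birthday) number strictly between its Left and Right option sets, and is therefore strictly between them.

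For part 3, I would split into subcases. If $G^L$ does not exist, then $G = \{\,\mid G^R\}$ is the simplest number less than $G^R$; since integers have the smallest birthdays inside any dyadic interval, this simplest number must be an integer (either $0$ if $G^R>0$, or $\lceil G^R\rceil - 1$ otherwise). The case where $G^R$ does not exist is symmetric. If both exist and some integer $k$ lies in $(G^L, G^R)$, then again the simplest number in the interval is an integer, for the same birthday reason. Part 4 is the residual case: when $(G^L,G^R)$ contains no integer, the Simplicity Theorem gives $G$ as the dyadic rational of least denominator in the interval.

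The main obstacle is the Simplicity Theorem itself. Its standard proof verifies $G - H = 0$, where $H$ is the proposed simplest number strictly between the options, by exhibiting a second-player response strategy: a Left move to $G^L - H$ is answered by exploiting $H > G^L$ (and, where needed, a simpler option $H^L$ with $H^L \geq G^L$ supplied by the simplicity of $H$), after which an induction on birthdays closes the position; the Right case is symmetric. The supporting birthday comparison---that integers beat non-integer dyadics, and simpler dyadics beat more complex ones within any interval---is a routine verification from the recursive definition of the birthday function, and is the only arithmetical lemma needed beyond the Simplicity Theorem.
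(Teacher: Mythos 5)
The paper does not actually prove this statement; it is recalled as background with citations to \cite{ANW,BCG,Sie}, so there is no in-paper argument to compare yours against. Your sketch is the standard textbook proof and is essentially correct: comparability of numbers for item 1, the Simplicity Theorem for items 2--4, and the usual birthday comparison (an interval containing an integer has an integer as its simplest element; otherwise the dyadic of least denominator wins). One detail in your parenthetical on the Simplicity Theorem is garbled, though: the move to $G-H^{L}$ is a \emph{Right} move in $G-H$ (it arises when proving $G-H\geqslant 0$), and what the simplicity of $H$ supplies there is an option with $H^{L}\leqslant G^{L}$ --- not $H^{L}\geqslant G^{L}$ --- so that Left can answer with $G^{L}-H^{L}\geqslant 0$; symmetrically, Left's move to $G-H^{R}$ is met using $H^{R}\geqslant G^{R}$. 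With that inequality reversed as written, the cited reply would not close the position, but the fix is immediate and the rest of the argument goes through.
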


The most important point to remember is item 2, that is, when a player plays in a number the situation gets worse for them. This has an important consequence when games are being analyzed.

\begin{theorem}[Number Avoidance Theorem]\label{th:nat}{\rm \cite{ANW,BCG,Sie}} Suppose that $G$ is a number and $H$ is
not. If Left can win moving first on $G+H$, then Left can do so with a move on $H$.
\end{theorem}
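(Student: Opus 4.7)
The plan is to exhibit a Left option $H^L$ with $G + H^L \geq 0$ (equivalently $H^L \geq -G$). I would split into two cases according to the outcome of $G + H$: either $G + H > 0$ or $G + H \,\|\, 0$, these being precisely the situations in which Left wins moving first.

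In the case $G + H \,\|\, 0$, any winning Left first move must lie inside $H$. For suppose instead that the winning move were $G + H \to G^L + H$ with $G^L + H \geq 0$; by item 2 of the theorem recalled above, $G^L < G$, so $G + H > G^L + H \geq 0$, contradicting $G + H \,\|\, 0$. Hence Left's winning move is of the required form $G + H \to G + H^L$, as desired.

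In the case $G + H > 0$, I would argue by contradiction. The inequality $G + H \geq 0$ forbids any Right reply $G + H \to G + H^R$ from being $\leq 0$, so no $H^R$ satisfies $H^R \leq -G$. Suppose additionally, for contradiction, that no $H^L$ satisfies $H^L \geq -G$. Then $H$ and the number $-G$ jointly satisfy the hypotheses of the Simplicity Theorem in the form applicable to arbitrary games, forcing $H$ to equal a number and contradicting the hypothesis that $H$ is not a number. Thus some $H^L \geq -G$ must exist, giving the desired winning Left move.

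The main obstacle is the appeal to the Simplicity Theorem in its general form: the statement recalled as item 4 in the paper applies only when $G$'s options are themselves numbers, whereas here one needs the stronger form, namely that any game $H$ satisfying ``no $H^L \geq z$ and no $z \geq H^R$'' for some number $z$ must itself equal a number (the simplest one fitting in that slot). This is a standard consequence of the basic theory of numbers (sometimes called the Number Translation Theorem), but would need to be invoked, or verified by a short induction on the rank of $H$, explicitly in the full write-up.
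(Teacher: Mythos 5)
The paper does not prove this theorem: it is recalled from the literature with citations \cite{ANW,BCG,Sie}, so there is no in-paper argument to compare yours against. Your proof is correct and is essentially the standard textbook one (it is, in outline, Siegel's proof of weak number avoidance): a winning Left move on the number component would force $G+H>0$, and in that regime the only way Left could lack a winning move on $H$ is for $H$ to fit against the number $-G$ on both sides, whence the general Simplicity Theorem would make $H$ a number, a contradiction. Two small points to tighten. First, in the case $G+H\,\|\,0$ you invoke $G^L<G$; this holds for the canonical form of $G$ (or any form whose options are numbers, as in the recalled theorem), but not for an arbitrary form whose \emph{value} is a number --- e.g.\ $\{-1,*\,|\,1\}=0$ has the Left option $*$, which is confused with $0$, not less than it. Since both the hypothesis and the conclusion depend only on the value of $G$ (the conclusion concerns options of $H$ only), you should say explicitly that $G$ may be replaced by its canonical form; with that remark the case goes through. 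Second, you correctly identify the one external ingredient, the Simplicity Theorem for arbitrary games (if some number $z$ satisfies $H^L\not\geq z$ and $H^R\not\leq z$ for all options, then $H$ equals the simplest such number); this is standard and appears in the cited references, though it is not usually called the Number Translation Theorem --- that name refers to the separate identity $x+G=\{x+G^{\mathcal{L}}\,|\,x+G^{\mathcal{R}}\}$ for a number $x$ and a non-number $G$.
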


In many cases, when checking the properties, the Left and Right options will refer to two specific moves on the `game board', one by Left and one by Right. If this happens, then the actual positions will automatically give the stronger conditions, $G^{LR} \cong G^{RL}$ or $G^{RL} \cong G^L$. Moreover, no calculations are required. Examples are given in Section \ref{sec:examples}.

\section{Examples and a Warning}\label{sec:examples}

In these examples, we illustrate that, sometimes, only two specific moves on the `game board', one for each player, are sufficient. We will refer to the specific moves by lower case letters, $\ell$ for Left  and $r$ for Right.

We first sketch a proof to show that the values of {\sc polychromatic chomp} (see Appendix for rules), {\sc blue-red-hackenbush} strings \cite{BCG,Roode} are numbers, and that \textsc{cutcake}~\cite{BCG} positions are integers.
We then give the properties that the following games satisfy:  {\sc domino shave}~\cite{DS},  {\sc shove}~\cite{ANW}, {\sc push}~\cite{ANW}, {\sc lenres}~\cite{SieAng}, {\sc divisors}, and {\sc partizan turning turtles}~\cite{BHN} (see Appendix for last two rulesets). 
 The two games, \textsc{partizan euclid}~\cite{MN} and \textsc{partizan subtraction}~\cite{Mesdal} are examples where many positions satisfy one or both properties. However, since there are positions which satisfy neither, then only a few positions are numbers.
\begin{example}
Let $G$ be a {\sc polychromatic chomp} position.  Let $\ell$ and $r$ be black and gray squares respectively.
If neither $G^{\ell}$ nor $G^r$ eliminates the other, then playing both moves, in either order, results in the same position $Q$, i.e., $G^{\ell r}\cong G^{r\ell}$.
Suppose $G^\ell$ eliminates $G^r$, as illustrated in Figure \ref{fig3}. In this case, Left can play her move before or after Right's
move, i.e., $G^\ell\cong G^{r\ell}$.
\begin{figure}[hbt]
\begin{center}
\scalebox{0.4}{\includegraphics[width=\linewidth]{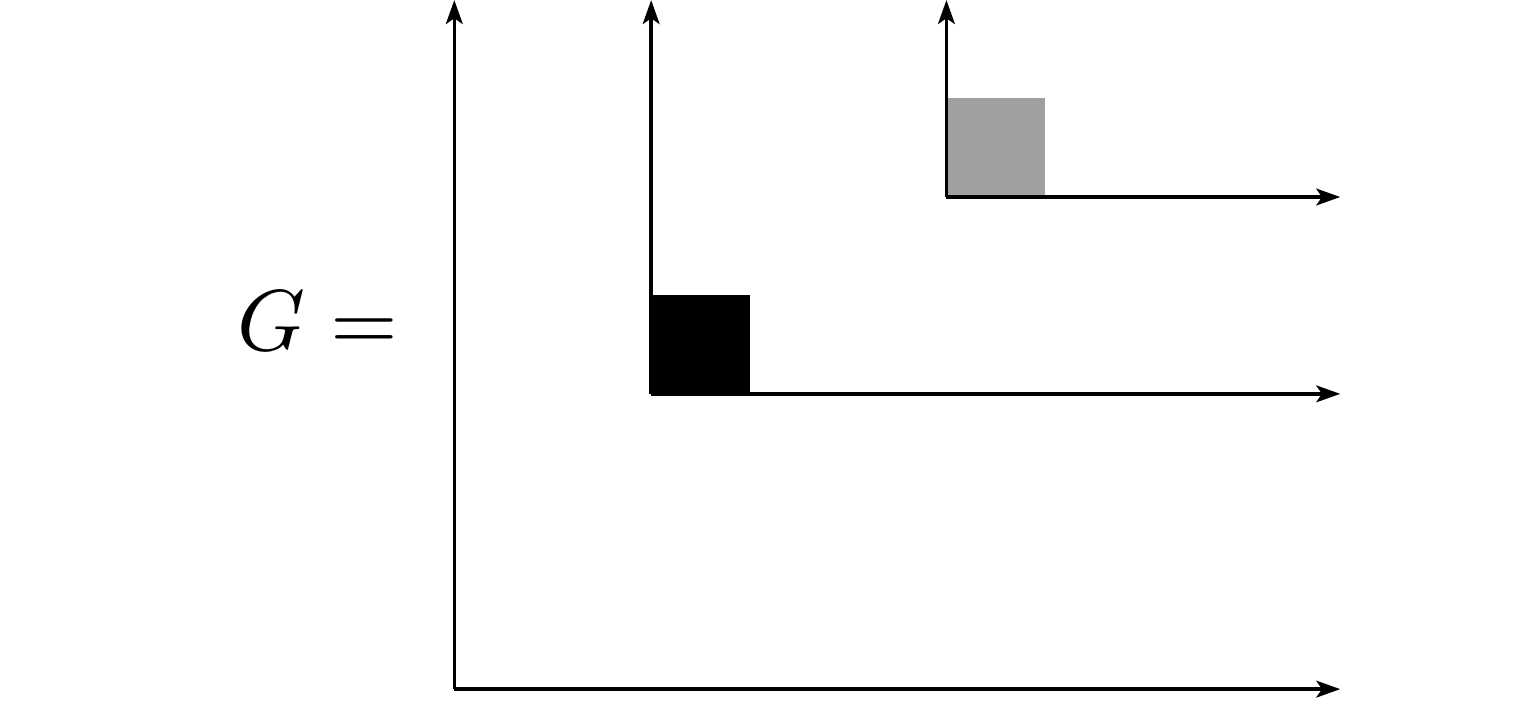}}
\end{center}

\begin{center}
\scalebox{0.4}{\includegraphics[width=\linewidth]{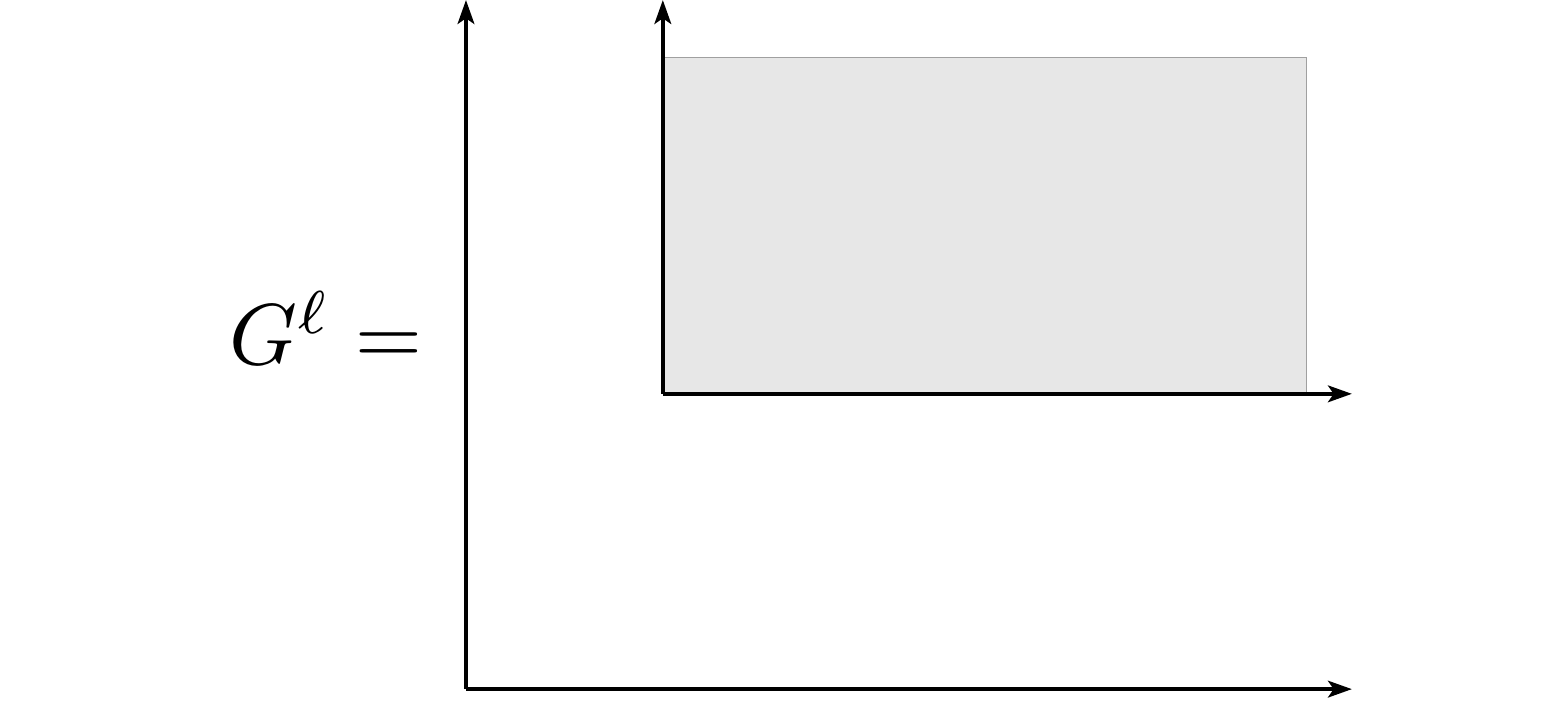}}
\end{center}

\begin{center}
\scalebox{0.4}{\includegraphics[width=\linewidth]{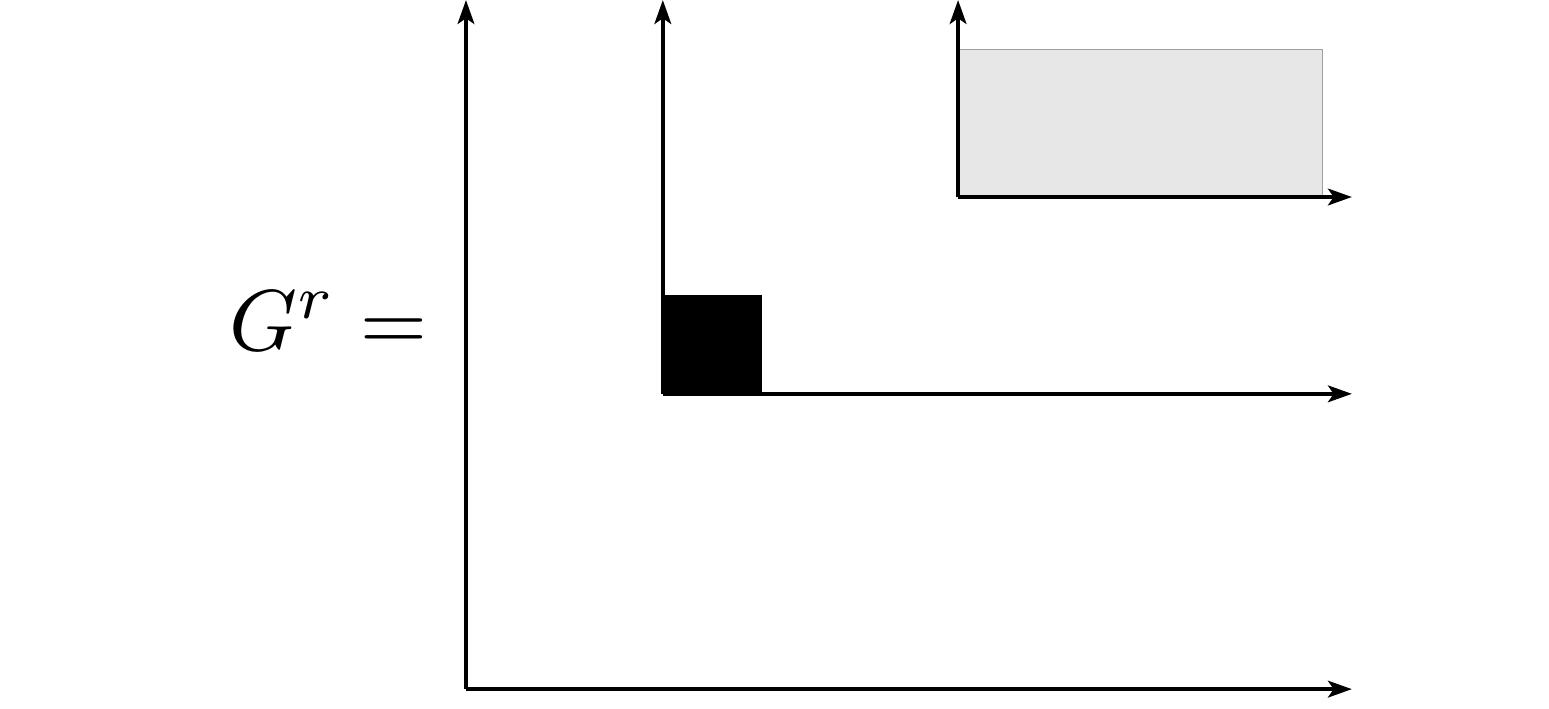}}
\caption{F1  argument in {\sc polychromatic chomp}.}\label{fig3}
\end{center}
\end{figure}

Hence, all $(G^L,G^R)$ satisfy one or both of the properties. Therefore, by Lemma \ref{th:sc}, all {\sc polychromatic chomp} positions are numbers.
\end{example}

\begin{example}\label{ex:brhssp}
Let $G$ be a {\sc blue-red-hackenbush} string and let $\ell$ and $r$ be the edges played by Left and Right, respectively. If $r$ is higher up the string than $\ell$ then playing $\ell$ eliminates $r$. Thus, $G^{r \ell}$ and $G^\ell$ are identical. Otherwise, playing $r$ eliminates $\ell$, and $G^{\ell r}\cong G^r$.

Hence, by Theorem \ref{th:snc}, all {\sc blue-red-hackenbush} strings are numbers.
\end{example}

\begin{example}
Consider a $m\times n$ {\sc cutcake} position. The moves are not independent but almost so.
For given $\ell$ and $r$, consider the pair of options,
$G^L=m\times(n-\ell) + m\times \ell$
and $G^R=(m-r)\times n + r\times n$, and their options
\begin{eqnarray*}
G^{LR}&=&m\times(n- \ell)+(m-r)\times \ell +  r\times\ell,\\
G^{RL}&=&(m-r)\times n  + r\times (n-\ell)+ r\times \ell.
\end{eqnarray*}

The moves cannot be interchanged and get the same board position. However, we know that
if $i>j$ then $k\times i \geqslant k\times j$ (intuitively, there are more moves for Left in $k\times i$ than in $k\times j$ )
and similarly $i\times k \leqslant j\times k$. The terms of $G^{LR}$ and $G^{RL}$ pair off:
$r\times\ell$ is in both;
$(m-r)\times \ell \leqslant (m-r)\times n$; $m\times(n-\ell)  \leqslant r\times(n-\ell)$.
Therefore $G^{LR} \leqslant G^{RL}$ and, so, $(G^L,G^R)$ satisfies the F2  property. Therefore, by Theorem \ref{th:i},
$G$ is an integer.
\end{example}

\begin{example}
\begin{enumerate}
    \item Given a {\sc shove} position $G$, if any token is pushed off the end of the strip then $(G^\ell,G^r)$ satisfies the F1  property; if not, $(G^\ell,G^r)$ satisfies the F2  property.
    \item Given a {\sc push} position $G$, if any token pushes the other then $(G^\ell,G^r)$ satisfies the F1  property; if not, $(G^\ell,G^r)$ satisfies the F2  property.
    \item Given a {\sc lenres} position $G$, if any digit in the move replaces the other then $(G^\ell,G^r)$ satisfies the F1  property; if not, $(G^\ell,G^r)$ satisfies the F2  property.
      \item Given a {\sc domino shave} position $G$, $(G^L,G^R)\in G^\mathcal{L}\times G^\mathcal{R}$
      satisfies the F1  property.
     \item Given a {\sc divisors} position $G=(l,r)$ and a pair of options, $(G^L,G^R)=((\ell',r),(\ell,r'))$, if $\ell'=r$ or $\ell=r'$ then $(G^L,G^R)$ satisfies the F1  property; if not, $(G^L,G^R)$ satisfies the F2  property.
          \item Given a {\sc partizan turning turtles} position $G$, if $G^L$ and $G^R$ conflict then $(G^L,G^R)$ satisfies the F1  property; if $G^L$ and $G^R$ don't conflict then $(G^L,G^R)$ satisfies the F2  property.
\end{enumerate}
\end{example}

Even more can be said about {\sc blue-red-cherries} \cite{ANW} and {\sc erosion} \cite{ANW}. These game all satisfy the F2  property and thus, by Theorem  \ref{th:i}, they are integers.
\begin{example}
\begin{enumerate}
  \item Given a {\sc blue-red-cherries} position $G$, all $(G^L,G^R)\in G^\mathcal{L}\times G^\mathcal{R}$.
  If Left removes a cherry from one end ($\ell$) and Right removes a cherry  from the other end ($r$)
  then $G^{\ell r} \cong G^{r\ell}$.
      \item Given an {\sc erosion} position $G$, all $(G^L,G^R)\in G^\mathcal{L}\times G^\mathcal{R}$ satisfy the F2  property. This is vacuously true since, by the rules, it is impossible for both players to have options at the same time.
\end{enumerate}
\end{example}

For the values all to be numbers, the properties must always be true. It is not sufficient for most of the positions to satisfy them. Two games, that have $\N$-positions but where many of the positions naturally satisfy one or the other property, are:
\begin{enumerate}
\item F1 : In \textsc{partizan euclid}~\cite{MN} with $G=(p,q)$ and $p>2q$ then  $G^{LR}=G^R$ or $G^{RL}=G^L$.
\item F2 : In the \textsc{partizan subtraction} subset of \textsc{splittles}~\cite{Mesdal}, let $a$ be the largest that can be taken. Suppose the heap size is $n$, and $n\geqslant 2a$ then Left taking $\ell$ and Right taking $r$ results in a heap of size $n-\ell-r$ regardless of the order. Thus $G^{\ell r}\cong G^{r\ell}$.
\end{enumerate}
\section{Proofs} \label{sec:c}

\begin{theorem} (Outcomes and numbers)\label{th:con}
Let $\mathbf{S}$ be a {\rm HCR}. All positions $G\in \mathbf{S}$ are numbers if and only if there is no $G\in \mathbf{S}$ 
 and a number $x$ such that $G+x\in\mathcal{N}$.
\end{theorem}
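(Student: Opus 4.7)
The forward direction is immediate: if every $G \in \mathbf{S}$ is a number then for any number $x$ the sum $G+x$ is a sum of two numbers, hence a number, and numbers lie only in outcome classes $\mathcal{L}$, $\mathcal{R}$, or $\mathcal{P}$ according to sign, so $G+x \notin \mathcal{N}$.

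For the converse, the plan is to induct on the well-founded follower relation of $\mathbf{S}$ (well defined because $\mathbf{S}$ is hereditary closed). Fix $G \in \mathbf{S}$ and assume every proper follower of $G$ is a number; then every option of $G$ is a number. By the recalled theorem on numbers, once all options are numbers, $G$ is itself a number unless there exist options $G^L, G^R$ with $G^L \geqslant G^R$. I would derive a contradiction by exhibiting, from any such pair, a number $x$ with $G + x \in \mathcal{N}$.

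To produce such an $x$, I would split on whether the inequality is strict. If $G^L > G^R$, density of dyadic rationals supplies a number $x$ with $-G^L < x < -G^R$; then in $G+x$ Left can move to $G^L + x > 0$, an $\mathcal{L}$-position because it is a positive number, while Right can move to $G^R + x < 0$, an $\mathcal{R}$-position, so both players have a winning first move and $G+x \in \mathcal{N}$. If instead $G^L = G^R$, the choice $x = -G^L$ makes both $G^L + x$ and $G^R + x$ equal to $0$, a $\mathcal{P}$-position, and once again both players win moving first, so $G+x \in \mathcal{N}$. In either case the hypothesis is contradicted, forcing $G$ to be a number.

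The step I expect to be the main obstacle is recognizing that the equality subcase must be handled separately: the strict inequality argument naturally suggests choosing $x$ strictly between $-G^L$ and $-G^R$, but this interval collapses when $G^L = G^R$, and one has to notice that landing simultaneously on the $\mathcal{P}$-position $0$ is equally effective. Beyond that, the argument is essentially mechanical, relying only on the fact that the options of $G$ are numbers (so their outcome classes are dictated by sign) together with the density of the dyadic rationals.
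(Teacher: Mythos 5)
Your proof is correct and follows essentially the same route as the paper: induct so that all options are numbers, and if $G$ fails the ordering condition needed to be a number, translate by a suitable number $x$ to produce $G+x\in\mathcal{N}$, contradicting the hypothesis. The paper phrases the two bad cases via the identities $\{a\,|\,a\}=a+*$ and $\{a\,|\,b\}=\frac{a+b}{2}\pm\frac{a-b}{2}$ (taking $x=-a$ or $x=-\frac{a+b}{2}$), which is just a specific instance of your choice of $x$ between $-G^L$ and $-G^R$, with the equality case handled separately exactly as you do.
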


\begin{proof} $\,$\\
($\Rightarrow$) If all positions $G \in \mathbf{S}$ are numbers then, regardless of what the numbers $x$ are, all $G+x$ are numbers. Hence,  there is no $G\in\mathbf{S}$ and a number $x$ such that $G+x\in\mathcal{N}$.\\

\noindent
($\Leftarrow$) Let $G\in\mathbf{S}$. 
 If $G^\mathcal{L}=\emptyset$ or $G^\mathcal{R}=\emptyset$ then $G$ is an integer. Suppose that  $G^\mathcal{L}\neq\emptyset$ and $G^\mathcal{R}\neq\emptyset$. By induction, since $\mathbf{S}$ is hereditary closed, all $G^L\in G^\mathcal{L}$ and $G^R\in G^\mathcal{R}$ are numbers. Hence, after removing dominated options, there are 3 possible cases:

\begin{enumerate}
  \item[1)] $G=\{a\,|\,a\}=a+*$, where $a$ is a number;
  \item[2)] $G=\{a\,|\,b\}=\frac{a+b}{2}\pm\frac{a-b}{2}$, where $a$ and $b$ are numbers and $a>b$;
  \item[3)] $G=\{a\,|\,b\}$, where $a$ and $b$ are numbers and $a<b$.
\end{enumerate}

If 1) or 2) then, $G-a\in\mathcal{N}$ or $G-\frac{a+b}{2}\in\mathcal{N}$, contradicting the assumptions. Therefore, we must have 3). Now $G$ is the simplest number strictly between $a$ and $b$. \\

In all cases $G$ is a number and the theorem follows.
\end{proof}

Now, a natural question arises: Is it easy, in practice, to know if a HCR does not have positions such that $G+x\in\mathcal{N}$? In other words, is Theorem \ref{th:con} useful? Lemma \ref{th:sc} answers that question.\\

\begin{lemma} \label{th:sc}
Let $\mathbf{S}$ be a {\rm HCR}. If, for any position $G\in \mathbf{S}$, all pairs $(G^L,G^R)\in G^\mathcal{L}\times G^\mathcal{R}$ satisfy the F1  property or the F2  property, then  all positions $G\in\mathbf{S}$ are numbers.
\end{lemma}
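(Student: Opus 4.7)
The strategy is to reduce to Theorem \ref{th:con}: it suffices to rule out any $G\in\mathbf{S}$ and number $x$ with $G+x\in\mathcal{N}$. I would argue by minimal counterexample. Suppose some $G\in\mathbf{S}$ together with a number $x$ satisfies $G+x\in\mathcal{N}$, and choose such $G$ of minimal formal rank. The strict followers of $G$ form a sub-HCR $\mathbf{S}'\subseteq\mathbf{S}$, and by minimality no $H\in\mathbf{S}'$ paired with a number $y$ gives $H+y\in\mathcal{N}$. Theorem \ref{th:con} applied to $\mathbf{S}'$ then yields that every strict follower of $G$ is a number.

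Having reduced to the case where all options of $G$ are numbers, I would run the same case analysis as in the proof of Theorem \ref{th:con}. If either $G^\mathcal{L}$ or $G^\mathcal{R}$ is empty, $G$ is an integer and we are done. Otherwise, since numbers are totally ordered, the undominated form of $G$ is $\{a\,|\,b\}$ with $a=\max G^\mathcal{L}$ achieved at some $G^L_0$ and $b=\min G^\mathcal{R}$ achieved at some $G^R_0$. Only the sub-case $a<b$ produces a number, so the whole plan collapses to showing $a<b$ from the F1/F2 hypothesis applied to the specific pair $(G^L_0,G^R_0)\in G^\mathcal{L}\times G^\mathcal{R}$.

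This last step is where the key mechanism lives. The recalled theorem about numbers guarantees that any $G^{L_0R}\in G^{L_0\mathcal{R}}$ is strictly greater than $a$ and any $G^{R_0L}\in G^{R_0\mathcal{L}}$ is strictly less than $b$. Under F1, either a witnessing $G^{R_0L}\geqslant a$ gives $a\leqslant G^{R_0L}<b$, or a witnessing $G^{L_0R}\leqslant b$ gives $a<G^{L_0R}\leqslant b$; under F2 the witnesses chain as $a<G^{L_0R}\leqslant G^{R_0L}<b$. In every sub-case $a<b$, so $G$ is the simplest number between $a$ and $b$, contradicting the choice of $G$. The main obstacle is purely book-keeping about which inequalities in these chains are strict versus weak: it is precisely the strictness of option inequalities in numbers that makes Cases 1 and 2 of Theorem \ref{th:con} incompatible with F1 or F2 at the extremal pair, and once that is made explicit the rest of the argument is automatic.
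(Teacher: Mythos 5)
Your proof is correct, but it takes a genuinely different route from the paper's. The paper stays at the level of outcomes: it takes a minimal-birthday $G$ with $G+x\in\mathcal{N}$, uses the Number Avoidance Theorem to extract options with $G^L+x\geqslant 0$ and $G^R+x\leqslant 0$, then shows that F1 at that pair directly contradicts these two inequalities, while F2 manufactures a smaller $\mathcal{N}$-position $G^{LR}+x$, contradicting minimality. You instead work at the level of values: minimality plus Theorem \ref{th:con} makes every proper follower a number, and then F1/F2 applied to the single extremal pair $(G^{L}_0,G^{R}_0)$ realizing $a=\max G^{\mathcal{L}}$ and $b=\min G^{\mathcal{R}}$ forces $a<b$ through the chains $a\leqslant G^{R_0L}<b$, $a<G^{L_0R}\leqslant b$, or $a<G^{L_0R}\leqslant G^{R_0L}<b$; the number $x$ and the Number Avoidance Theorem are never used, and the detour through Theorem \ref{th:con} could be replaced by a plain induction on birthday. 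Your version even yields something marginally stronger, since the hypothesis is only invoked at the extremal pair of each position. One small point of precision: the strict inequalities $G^{L_0R}>a$ and $G^{R_0L}<b$ do not quite follow from item 2 of the recalled theorem as stated (which concerns the form after removing dominated options); what you actually need is the general fact that no Right option of $H$ satisfies $H^R\leqslant H$, combined with the comparability of numbers — exactly the step the paper itself uses in the proof of Lemma \ref{lem:descendfloss} — so the claim is sound but deserves that citation rather than the simplicity-theorem one.
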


\begin{proof}$\,$\\

\vspace{-0.4cm}
Item 1: By Theorem \ref{th:con}, it is enough to prove that if all pairs $(G^L,G^R)\in G^\mathcal{L}\times G^\mathcal{R}$ satisfy the F1  property or the F2  property then there is no $G\in\mathbf{S}$ and a number $x$ such that $G+x\in\mathcal{N}$.\\

For the contrapositive, suppose that there is a position $G\in\mathbf{S}$ and a number $x$  such that $G+x\in\mathcal{N}$. Assume that the birthday of $G$ in such conditions is the smallest possible.\\

Since  $G+x\in\mathcal{N}$, there are $G^L+x\geqslant 0$ and $G^R+x \leqslant 0$ (Theorem \ref{th:nat}). Due to the hypothesis, the pair $(G^L,G^R)$ satisfies the F1  property or the F2  property. If the pair satisfies the F1  property, there is $G^{RL}$ such that $G^{RL}\geqslant G^L$ or there is $G^{LR}$ such that $G^{LR}\leqslant G^R$. If the first happens, then $G^{RL} \geqslant G^L$ implies $G^{RL}+x
\geqslant G^L+x \geqslant 0$. That is  incompatible with $G^R+x\leqslant 0$. If the second happens, then $G^{LR}\leqslant
 G^R$ implies $G^{LR}+x \leqslant G^R+x \leqslant 0$. That is incompatible with $G^L+x
\geqslant 0$. In either case we have a contradiction; the pair $(G^L,G^R)$ cannot satisfy the F1  property.\\

Hence,  the pair $(G^L,G^R)$ satisfies the F2  property, and there are $G^{LR}\in\mathbf{S}$ and $G^{RL}\in\mathbf{S}$ such that $G^{LR}\leqslant G^{RL}$. Since $G^L+x \geqslant 
0$, we have $G^{LR}+x \not \leqslant 0$. Also, since $G^R+x\leqslant 0$, we have $G^{RL}+x \not \geqslant
0$. The second inequality allows to conclude that $G^{LR}+x \not \geqslant
0$ because $G^{LR}\leqslant G^{RL}$. However, $G^{LR}+x \not \leqslant 0$ and $G^{LR}+x \not \geqslant 0$, implies that  $G^{LR}+x\in\mathcal{N}$, contradicting the smallest rank assumption. Therefore, the pair $(G^L,G^R)$ cannot satisfy the F2  property.\\

The pair $(G^L,G^R)$ doesn't satisfy the F1 property or the F2
property, and that contradicts the hypothesis. There is no $G\in\mathbf{S}$ and  number $x$ such that $G+x\in\mathcal{N}$. Therefore, all positions $G\in\mathbf{S}$ are numbers.\\
\end{proof}

It is possible to have a pair of options that satisfies the F2  property without satisfying the F1  property; an example of that is a pair like $(G^L,G^R)=(\{0,*\,|\,*\},\{*\,|\,0,*\})$. However, the options of $*$ do not satisfy the F2  property. On the other hand, if all followers also satisfy the F2  property, then the next lemma shows that all pairs satisfy the F1  property.

\begin{lemma}\label{lem:descendfloss} Let $\mathbf{S}$ be a {\rm HCR} such that, given any position $G\in \mathbf{S}$, all pairs $(G^L,G^R)\in G^\mathcal{L}\times G^\mathcal{R}$ satisfy the F2  property then all pairs satisfy the F1  property.
\end{lemma}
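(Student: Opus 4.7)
The plan is to first invoke Lemma \ref{th:sc}. The hypothesis that every pair satisfies the F2 property trivially implies that every pair satisfies the disjunction ``F1 property or F2 property'', so Lemma \ref{th:sc} applies and every position $G\in\mathbf{S}$ is a number. In particular, for every such $G$, all options $G^L$, $G^R$ are numbers, and so are their options $G^{LR}$, $G^{RL}$.

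Next, I would fix an arbitrary pair $(G^L,G^R)\in G^\mathcal{L}\times G^\mathcal{R}$ and feed it into the F2 hypothesis to get, by assumption, options $G^{LR}\in G^{L\mathcal{R}}$ and $G^{RL}\in G^{R\mathcal{L}}$ with $G^{LR}\leqslant G^{RL}$. Since $G^L$ is a number whose options are numbers, item 2 of the theorem on numbers (applied to $G^L$) yields $G^L<G^{LR}$. Symmetrically, applying item 2 to $G^R$ gives $G^{RL}<G^R$.

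Chaining the three inequalities produces
\[
G^L \;<\; G^{LR} \;\leqslant\; G^{RL} \;<\; G^R,
\]
so in particular $G^{RL}>G^L$ (whence $G^{RL}\geqslant G^L$) and $G^{LR}<G^R$ (whence $G^{LR}\leqslant G^R$). Either witness alone suffices for the F1 property, and both hold \emph{strictly}, which matches the remark in the introduction that the F2 property implies the F1 property with strict inequality.

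There is no genuine obstacle: the only point worth checking is that the appeal to Lemma \ref{th:sc} is legitimate, which it is because the F2 hypothesis directly supplies the ``F1 or F2'' disjunction needed by that lemma — no circular reliance on the statement being proved. Everything else is a two-step chase through the ordering of a number and its options.
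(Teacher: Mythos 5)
Your proof is correct and takes essentially the same route as the paper's: both first invoke Lemma \ref{th:sc} to conclude that every position of $\mathbf{S}$ is a number, and then chain $G^L < G^{LR} \leqslant G^{RL}$ to produce the F1 witness. The only cosmetic difference is that the paper derives $G^L < G^{LR}$ from ``$G^{LR} \not\leqslant G^L$ because $G^{LR}$ is a Right option, plus comparability of numbers,'' whereas you cite item 2 of the background theorem --- the same fact in different packaging.
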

\begin{proof}

It is enough to prove that if a pair $(G^L,G^R)\in G^\mathcal{L}\times G^\mathcal{R}$ satisfies the F2  property, it also satisfies the F1  property.\\

Suppose that a pair  $(G^L,G^R)$ satisfies the F2  property. If so, by definition, there are $G^{LR}$ and $G^{RL}$ such that $G^{LR}\leqslant
G^{RL}$. Since $G^{LR}$ is a right option of $G^L$, we have $G^{LR} \not \leqslant
G^L$. On the other hand, by Lemma \ref{th:sc}, all positions of $\mathbf{S}$ are numbers, so $G^{LR}$ cannot be incomparable with $G^L$. Therefore, we must have $G^{LR}> G^L$ and consequently $G^{RL}\geqslant G^{LR}>G^L$. Thus  $(G^L,G^R)$ satisfies the F1  property.
\end{proof}

\begin{theorem} \label{th:snc}
Let $\mathbf{S}$ be a {\rm HCR}. All positions $G\in\mathbf{S}$ are numbers if and only if, for any position $G\in \mathbf{S}$, all pairs $(G^L,G^R)\in G^\mathcal{L}\times G^\mathcal{R}$ satisfy the F1  property.
\end{theorem}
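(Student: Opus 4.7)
The backward direction is immediate from Lemma~\ref{th:sc}, since every F1 pair satisfies ``F1 or F2''. For the forward direction, I argue by contradiction: assume all positions in $\mathbf{S}$ are numbers and some pair $(G^L,G^R)$ violates F1. Write $v_L, v_R, v$ for the values of $G^L, G^R, G$; since all are numbers and $G^L<G<G^R$, we have $v_L<v<v_R$. Violation of F1 says that, whenever $G^{L\mathcal{R}}\ne\emptyset$, every element has value $>v_R$; and whenever $G^{R\mathcal{L}}\ne\emptyset$, every element has value $<v_L$.

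Apply the simplicity theorem to $G^L$: the value $v_L$ is the simplest number in the interval $I_L := (\max G^{L\mathcal{L}}, \min G^{L\mathcal{R}})$, where $\max\emptyset = -\infty$ and $\min\emptyset = +\infty$. Since $\max G^{L\mathcal{L}}<v_L<v_R$ and $v_R<\min G^{L\mathcal{R}}$ (the latter automatic when $G^{L\mathcal{R}}=\emptyset$ and forced by F1 failure otherwise), we have $v_R\in I_L$, so $b(v_L)\le b(v_R)$. The symmetric application to $G^R$ gives $b(v_R)\le b(v_L)$, whence $b(v_L)=b(v_R)=:b$.

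The final ingredient is the birthday-separation fact: any two distinct numbers of birthday $b$ are separated by a number of strictly smaller birthday. This is a standard consequence of the inductive construction of numbers -- on each day exactly one new number is born in each gap between consecutive older numbers, so two distinct day-$b$ numbers lie in different gaps. Applied to $v_L<v_R$, this produces $z\in(v_L,v_R)$ with $b(z)<b$. But $\max G^{L\mathcal{L}}<v_L<z<v_R<\min G^{L\mathcal{R}}$ gives $z\in I_L$, contradicting $v_L$ being the simplest element of $I_L$. Hence every pair $(G^L,G^R)$ satisfies F1.

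The main obstacle is the birthday-separation fact; it is standard but non-elementary, and one must state it cleanly (by induction on $b$ or directly from the day-by-day construction of the numbers). Everything else is bookkeeping around two applications of the simplicity theorem.
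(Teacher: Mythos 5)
Your proof is correct, but the forward direction takes a genuinely different and heavier route than the paper's. The paper argues directly: since all positions of $\mathbf{S}$ are numbers, $G^L<G<G^R$, so $G^L-G^R<0$; in particular Right wins $G^L-G^R$ moving first, and Right's opening moves in that difference are exactly the positions $G^{LR}-G^R$ and $G^L-G^{RL}$, so one of them is $\leqslant 0$ --- which is verbatim the F1 property. Your argument instead negates F1, uses comparability of numbers to turn ``not $\leqslant$'' into ``$>$'', applies the simplicity theorem twice to place $v_R$ in the defining interval of $v_L$ and vice versa, concludes $b(v_L)=b(v_R)$, and then invokes the birthday-separation fact to produce a simpler number in the interval, contradicting simplicity. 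All the steps check out (the memberships $v_R\in I_L$ and $v_L\in I_R$ are correctly justified from the F1 failure together with $\max G^{L\mathcal{L}}<v_L$ and $v_R<\min G^{R\mathcal{R}}$), but you must import the full simplicity machinery plus a separation lemma that itself requires proof, whereas the paper's argument is three lines resting only on the definition of ``Right wins moving first'' and the decomposition of the Right options of $G^L-G^R$. What your version buys is an explicit numerical picture: it shows \emph{why} failure of F1 would force $G^L$ and $G^R$ to be equally simple, which cannot happen for two distinct numbers; the paper's version is the economical one.
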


\begin{proof}
($\Leftarrow$) Consequence of Lemma \ref{th:sc}.\\

($\Rightarrow$) Let $G\in \mathbf{S}$ and $(G^L,G^R)\in G^\mathcal{L}\times G^\mathcal{R}$. Since $G$ is a number, $G^L< G^R$ and thus $G^L-G^R< 0$. Since Right, playing first, wins $G^L-G^R$, either there is a $G^{LR}$ with $G^{LR}-G^R\leqslant 0$ or some $G^{RL}$ with $G^{L}-G^{RL}\leqslant 0$. Hence, there is $G^{LR}\leqslant G^R$ or $G^{L}\leqslant G^{RL}$, and, by definition, $(G^L,G^R)$ satisfies the F1  property.
\end{proof}

By Lemma \ref{lem:descendfloss}, if all pairs $(G^L,G^R)$ satisfy the F1  property or the F2  property, then all pairs  satisfy the F1  property. That means that a pair satisfying the F2  property also satisfies the F1  property. But, observe that the opposite is not true: it is possible to have a pair satisfying the F1  property without satisfying the F2  property. For example, if $G=\frac{1}{2}=\{0\,|\,1\}$ (canonical form), then the pair $(0,1)$ satisfies the F1  property and does not satisfy the F2  property because $G^{L\mathcal{R}}=\emptyset$. The F2  property is a stronger condition and has a surprising consequence.

\begin{theorem} \label{th:i}
Let $\mathbf{S}$ be a {\rm HCR}. If, for any position $G\in \mathbf{S}$, all pairs $(G^L,G^R)\in G^\mathcal{L}\times G^\mathcal{R}$ satisfy the F2  property, then all positions $G\in\mathbf{S}$ are integers.
\end{theorem}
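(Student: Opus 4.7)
My plan is strong induction on the birthday of $G\in\mathbf{S}$. Before starting the induction, I would record that Lemma~\ref{lem:descendfloss} turns the hypothesis (F2  on all pairs) into the assertion that F1  holds on all pairs as well, so Lemma~\ref{th:sc} (equivalently Theorem~\ref{th:snc}) already gives that every $G\in\mathbf{S}$ is a number. The remaining work is thus the single upgrade from ``number'' to ``integer''.

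For the base case, if $G^\mathcal{L}=\emptyset$ or $G^\mathcal{R}=\emptyset$, item~3 of the opening theorem about numbers gives that $G$ is an integer immediately. For the inductive step I would assume both option sets are non-empty and that every strict follower in $\mathbf{S}$ is already an integer. Let $a$ be a largest Left option and $b$ a smallest Right option of $G$; by induction these are integers, and $a<b$ since $G$ is a number with number options. By item~3, it suffices to produce an integer strictly between $a$ and $b$, so the only case to rule out is $b=a+1$.

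Suppose, for contradiction, that $b=a+1$, and apply the F2  property to the pair $(a,b)\in G^\mathcal{L}\times G^\mathcal{R}$. This furnishes $G^{LR}\in a^\mathcal{R}$ and $G^{RL}\in b^\mathcal{L}$ with $G^{LR}\leqslant G^{RL}$. Both belong to $\mathbf{S}$ by hereditary closure and have smaller birthday than $G$, so by induction they are integers. Item~2 of the same theorem, applied to the number $a$, gives $a<G^{LR}$; applied to $b$, it gives $G^{RL}<b$. Chaining yields $a<G^{LR}\leqslant G^{RL}<a+1$, impossible among integers, and the contradiction closes the induction.

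The main (mild) obstacle is really only bookkeeping: confirming that F2  is invoked on the specific simplified pair $(a,b)$ rather than on an arbitrary pair, and that the hereditary-closed hypothesis genuinely supplies $G^{LR},G^{RL}\in\mathbf{S}$ of strictly smaller birthday so the inductive hypothesis applies to them. Both points follow at once from the stated hypotheses, so no further theory is required.
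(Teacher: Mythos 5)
Your proof is correct and follows essentially the same route as the paper's: induction on birthday, numberhood from Lemma~\ref{th:sc}, and then the F2 property supplying integers $G^{LR}\leqslant G^{RL}$ with $G^L<G^{LR}\leqslant G^{RL}<G^R$, which forces $G$ to be an integer. The only cosmetic difference is that you phrase the last step as a contradiction in the case $b=a+1$, whereas the paper directly exhibits $k=G^{RL}$ as an integer strictly between the options.
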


\begin{proof}

Let $G\in \mathbf{S}$. If $G\mathcal{^L}=\emptyset$ or $G\mathcal{^R}=\emptyset$, then $G$ is an integer and the theorem holds. Suppose that $G\mathcal{^L}\not=\emptyset$ and $G\mathcal{^R}\not=\emptyset$. By Lemma \ref{th:sc}, all positions $G\in\mathbf{S}$ are numbers and the canonical form is $G=\{G^L\mid G^R\}$.
 By induction, $G^L$ and $G^R$ are integers. Since $(G^L,G^R)$ satisfies the F2  property, there is $G^{LR}\leqslant
G^{RL}$ and, by induction, both are integers. Let $G^{RL}=k$.
  Therefore, we have that $G^L<
 k$ and $G^R>
 k$, and, thus $G$ is an integer. \end{proof}

\begin{observation}
Theorem \ref{th:i} exhibits a sufficient but \emph{not necessary} condition. Consider $\mathbf{S}$, a HCR whose game forms are $\{-2\,|\,0\}$, $-2$, $-1$, and $0$ (the last three, canonical forms). Of course, all game values of $\mathbf{S}$ are integers. However, regarding $G=\{-2\,|\,0\}$, the pair $(G^L,G^R)=(-2,0)$ does not satisfy the F2 property.
\end{observation}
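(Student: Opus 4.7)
The task is to verify a concrete counterexample showing that the implication in Theorem \ref{th:i} is not reversible: all positions of some HCR are integers yet the F2 property fails for some pair. The proof plan has three small steps.

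First I would verify that $\mathbf{S}=\{\{-2\mid 0\},-2,-1,0\}$ really is a HCR. Using the canonical forms $0=\{\cdot\mid\cdot\}$, $-1=\{\cdot\mid 0\}$, and $-2=\{\cdot\mid -1\}$, the option-closure of each element stays inside $\mathbf{S}$: the options of $\{-2\mid 0\}$ are $-2$ and $0$, the only option of $-2$ is $-1$, the only option of $-1$ is $0$, and $0$ has no options. So $\mathbf{S}$ is hereditarily closed.

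Second I would check that every game value in $\mathbf{S}$ is an integer. Three of the four forms are already canonical integers. For the remaining form $G=\{-2\mid 0\}$, apply the number-simplicity result recalled at the start of the paper: $G^L=-2$ and $G^R=0$ are numbers with $G^L<G^R$, and the integer $-1$ satisfies $-2<-1<0$, so by item 3 of that theorem $G=-1$. Hence every position in $\mathbf{S}$ has integer value.

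Third, and this is the crux of the observation, I would check that the pair $(G^L,G^R)=(-2,0)$ fails the F2 property. By Definition~\ref{def:dp}, F2 requires the existence of both a $G^{LR}\in G^{L\mathcal{R}}$ and a $G^{RL}\in G^{R\mathcal{L}}$ with $G^{RL}\geqslant G^{LR}$. But in the canonical forms used to populate $\mathbf{S}$, $G^R=0$ has no Left option at all, so $G^{R\mathcal{L}}=\emptyset$ and no $G^{RL}$ exists. (Equally, one could note that $G^L=-2$ in canonical form has $G^{L\mathcal{R}}=\{-1\}$, which is nonempty, but the emptiness on the other side alone is fatal.) Therefore F2 fails for this pair, even though every value in $\mathbf{S}$ is an integer, exhibiting the promised sufficient-but-not-necessary behaviour.

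The only potential subtlety is that one might object that the F2 property should be tested against some \emph{form} of the options rather than their canonical representatives. I would address this briefly by noting that the statement of Definition~\ref{def:dp} is in terms of the option sets of the game forms of $\mathbf{S}$, and $\mathbf{S}$ was specified form-by-form, so the canonical-form option sets are exactly the ones being quantified over. No further calculation is needed.
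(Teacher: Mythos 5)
Your proposal is correct and matches the paper's (implicit) justification of the observation: the paper relies on exactly the same three checks — hereditary closure of $\mathbf{S}$, the simplicity argument giving $\{-2\,|\,0\}=-1$, and the failure of F2 because $G^R=0$ has $G^{R\mathcal{L}}=\emptyset$ so no $G^{RL}$ can exist. Your closing remark about testing F2 against the specified forms rather than abstract values is a sensible clarification, consistent with how the paper defines $\mathbf{S}$ form-by-form.
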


\section*{Acknowledgments}
\noindent
Alda Carvalho was partially supported by the Project CEMAPRE/REM-UIDB/05069/2020, financed by FCT/MCTES through national funds.\\

\noindent
Melissa A. Huggan was supported by the Natural Sciences and Engineering Research Council of Canada (funding reference number PDF-532564-2019).\\

\noindent
Richard J. Nowakowski was supported by the Natural Sciences and Engineering Research Council of Canada (funding reference number 4139-2014).\\

\noindent
Carlos Santos is a CEAFEL member and has the support of
UID/MAT/04721/2019 strategic project.\\

\vspace{1cm}
\textbf{Appendix 1: Rulesets}\label{appendix1}\\

\noindent
{\sc divisors}\\

\noindent
Position: An ordered pair of positive integers $(l,r)$.\\

\noindent
Moves: Left is allowed to replace $(l,r)$ by $(l',r)$  where $l'<l$ is a divisor of $r$. Right is allowed to replace $(l,r)$ by $(l,r')$  where $r'<r$ is a divisor of $l$.

\begin{center}
$(5,4)$ \raisebox{0.0cm}{$\overset{L}{\rightarrow}$} $(2,4)$  \raisebox{0.0cm}{$\overset{R}{\rightarrow}$} $(2,1)$ \raisebox{0.0cm}{$\overset{L}{\rightarrow}$} $(1,1)$
\end{center}

\vspace{0.5cm}
\noindent
{\sc partizan turning turtles}\\

\noindent
Position: A line of turtles. A turtle may be on its feet or on its back.\\

\noindent
Moves: Left is allowed to choose two upside-down turtles and turn them onto its feet. Right is also allowed to choose a pair of turtles, provided that the leftmost is on its feet and the other is on its back; his move is turning over both turtles.

\begin{center}
\td\tu\td\td \raisebox{0.1cm}{$\overset{L}{\rightarrow}$} \tu\tu\td\tu  \raisebox{0.1cm}{$\overset{R}{\rightarrow}$} \tu\td\tu\tu
\end{center}

\vspace{0.5cm}
\noindent
{\sc polychromatic chomp}\\

\noindent
Position: A grid with one poison square in the lower left corner. Besides the poison square, each square is either black or gray.\\\\

\noindent
Moves: On her turn, Left chooses a black square and removes it and all other squares above or to the right of it. On his turn, Right moves analogously, but he has to choose a gray square.

\begin{center}
\includegraphics[width=\linewidth]{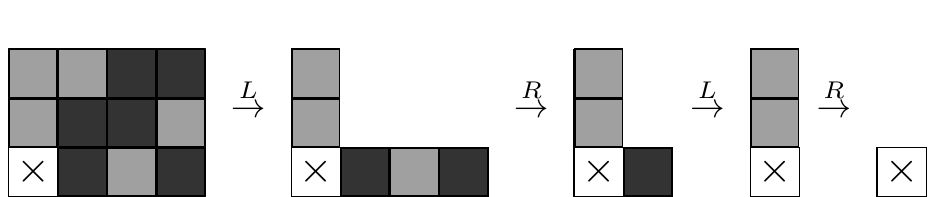}
\end{center}

\end{document}